\newcommand{\vep}{\varepsilon}
\newcommand{\tX}{\tilde{X}}
\newcommand{\be}{\begin{equation}}
\newcommand{\ee}{\end{equation}}
\newtheorem{thm}{Theorem}[section]
\newtheorem{lem}[thm]{Lemma}
\newtheorem{coro}[thm]{Corollary}
\newtheorem{rem}[thm]{Remark}
\newtheorem{prop}[thm]{Proposition}
\newtheorem{defi}[thm]{Definition}
\newcommand{\E}{\hat{\mathbb{E}}}
\newcommand{\R}{\mathbb{R}}
\newcommand{\X}{\mathbb{X}}
\newcommand{\BX}{\mathbf{X}}
\newcommand{\oc}{\mathcal {C}}
\newcommand{\FC}{\mathscr{C}}
\newcommand{\B}{\mathbb{B}}
\newcommand{\BB}{\mathbf{B}}
\newcommand{\hc}{\hat{c}}
\newcommand{\tBX}{\tilde{\mathbf{X}}}
\begin{document}
\title{ Wong-Zakai Approximation for SDEs Driven by $G-$Brownian Motion }

\author{Shige Peng\\[7pt]
\small\itshape{School of Mathematics, Shandong University,}\\
\small\itshape{Jinan, China, peng@sdu.edu.cn} \and
Huilin Zhang\footnote{the corresponding author}\\
\small\itshape{School of Mathematics, Fudan University,}\\
\small\itshape{Shanghai, China, huilinzhang2014@gmail.com$^*$}}
\date{} \maketitle
\makeatletter

\newskip\@footindent
\@footindent=0em
\renewcommand\footnoterule{\kern-3\p@ \hrule width 0.4\columnwidth \kern 2.6\p@}

\long\def\@makefntext#1{\@setpar{\@@par\@tempdima \hsize
\advance\@tempdima-\@footindent
\parshape \@ne \@footindent \@tempdima}\par
\noindent \hbox to \z@{\hss\@thefnmark\hspace{0.2em}}#1}
\renewcommand\thefootnote{\myfootnotestyle{\arabic{footnote}}}
\def\@makefnmark{\hbox{\@thefnmark}}
\makeatother

\ \

\noindent\textbf{Abstract}: In this paper, we build the Wong-Zakai approximation for Stratonovich type
SDE driven by $G$-Brownian motion and obtain the quasi-surely convergence rate under H\"{o}lder norm by rough path argument. As a corollary, we obtain the quasi-continuity of solutions of random RDEs driven by lifted martingales under a sequence of singular measures.
\\

\ \

\noindent\textbf{Key words}: $G$-expectation, rough paths,
Wong-Zakai approximation, quasi-surely continuity
\\

\noindent\textbf{Mathematics Subject Classification }(2010). 60H10;
60H35; 34F05

\date{}
\maketitle

\section{Introduction}
Nonlinear expectation theory ($G$-expectation theory) is introduced by Peng in \cite{P07a,P08a,P10}. It is widely used as a helpful tool for financial problems concerning
model uncertainty and ambiguous volatility \cite{EJ13, V14}. Also, it provides a
coexistence framework for a set of mutually singular martingale
measures. However, according to probability uncertainty of the $G-$expectation theory, it is hard to
simulate stochastic differential equations driven by $G-$Brownian motion ($G-$SDEs) directly in the classical probabilistic way. For example, how to numerically generate a random variable sharing the same ``distribution'' with $G-$Brownian motion is still
an interesting and important problem in both $G-$expectation theory and application in real market (see recent progress in \cite{JP16, PZ20}). In this paper, authors provide an alternative way of calculating $G-$SDEs pathwisely.\\

Wong and Zakai first build the approximation of Stratonovich SDEs by
a sequence of ODEs in \cite{WZ65a,WZ65b}. In their papers, they give sufficient conditions under which solutions of ODEs converge to SDEs of Stratonovich's kind. As well-known, these conditions work well in one-dimensional case. For the high dimensional case, the approximation sensitively depends on how well the driven signals of ODEs $W_n$ converges to $W$, see Stroock and Varadhan \cite{SV72} for the high dimensional case. Lyons establishes the rough path theory in his seminal work \cite{L98}, and uses Wong-Zakai theorem to get the equivalence between rough differential equation (RDE) and SDE. The numerical analysis of Taylor's expansion kind ( Euler or Milstein's) approximation for SDEs driven by fractional Brownian
Motion is studied in \cite{DNT12} pathwisely and \cite{BFRS16} in
the sense of expectation. Recently, Kelly and Melbourne \cite{KM16} provide an approximation to SDEs through rough path method by constructing smooth and c\`adl\`ag approximations with smooth flows to (lifted) Brownian motion. Hairer and Pardoux \cite{HP15} give a positive answer to the Wong-Zakai kind approximation to SPDEs by the fast-developing regularity structure method. All these results imply rough path is a helpful tool for studying Wong-Zakai kind approximation, and then further
obtaining numerical results for SDEs driven by $G$-Brownian motion.\\

$G$-Brownian motion is first lifted as a geometric rough path under
$p$-variation norm in \cite{GQY14}. In that paper, authors also
establish the Euler-Maruyama approximation for $G-$SDEs. Later, authors of this paper follow notations from Gubinelli \cite{G04,G10}, and lift the $G$-Brownian motion by Kolmogorov's theorem (rough paths version in $G$-framework) in \cite{PZ16}.
Also, $G$-Stratonovich integral is introduced there. Followed by that work, in this paper we build the Wong-Zakai approximation for Stratonovich type SDE driven by $G$-Brownian motion and obtain the quasi-surely convergence rate under H\"{o}lder norm by rough path argument. To apply Wong-Zakai's argument for this purpose, the first problem is to show solutions of ODEs driven by piecewisely linearized $G$-Brownian motion actually belong to the random variable space $L_G^p$ in $G$-framework. More precisely, one can pathwisely define the solution $y^{(n)}_t(\omega)$, to the following ODE driven by piecewisely linearized $G$-Brownian motion $B_t^{(n)}$,
\begin{equation}  
dy_t^{(n)}= f(y_t^{(n)})d B_t^{(n)} + g(y_t^{(n)})d\langle B
\rangle_t + h(y_t^{(n)}) dt,
\end{equation}
and show that it has nice integrability. However, it is not clear whether the solution $y^{(n)}_t(\omega)$ actually belongs to the space $L_G^p$ or not, because of the lack of quasi-continuity (see Definition \ref{cts-qs}) of $y^{(n)}_t(\omega)$. (Ironically, it is well-known that ODEs solutions are not continuous with respect to the driven signal under the uniform topology, and rough path theory is built on the stronger $p-$variation or $1/p-$H\"older norm topology.) The property of quasi-continuity is a big difference between $G-$expectation theory and the second order backward stochastic differential equation theory, and only for elements in $L_G^p$ one can apply classical arguments in the sublinear framework (e.g. the uniform conditional expectation, martingale inequality, etc), which is vital for the proof of the approximation. To apply rough path argument to obtain the convergence rate, e.g. the uniform continuity theorem mentioned in \cite{FH14, LQ02}, we need to explicitly calculate the distance between the piecewisely linearized $G$-Brownian motion $B_t^{(n)}$ and $B$ under rough path topology. One should note that the canonical process $B_t$ now is no longer Gaussian but a (aggregated) martingale under a set of singular measures, so classical tools such as Wiener Chaos theory do not work anymore. Concerning these problems, firstly, we show that solutions of ODEs driven by piecewisely linearized $G$-Brownian motion actually belong to the random variable space in
$G$-framework by an approximation argument. The method is also applicable to $G-$ODEs driven by other piecewisely linearized $G-$martingales. Secondly, instead of classical methods from Wiener Chaos expansion
we do a direct calculation based on martingale properties under nonlinear framework and a sharp application of the algebraic property of rough paths. As a consequence, firstly, we establish the equivalence
between an aggregated RDE driven by lifted canonical process under a set of singular martingale measures and $G$-Stratonovich kind SDE, which is first established in \cite{L98} in the classical case and implied in $G-$framework in \cite{GQY14}. Secondly, according to the representation of random variable space $L_G^p$, i.e. Theorem \ref{Grep}, we obtained the quasi-continuity of solutions of such RDEs.\\

The paper is organized as the following. In Section 2, we recall
some basic notations in $G$-expectation theory and rough path theory.
Then in Section 3, we give the main results of this paper. Firstly,
we prove the Wong-Zakai theorem in $G$-framework. Secondly, we estimate
the convergence rate by the continuity theorem of the It\^{o}-Lyons
mapping.

\bigskip

{\bf Acknowledgements:} 
H.Z. is supported by the Youth Program of National Natural Science Foundation No. 11901104 and Postdoctoral Science Foundation of China.


\section{Notations and preliminaries about rough paths and $G$-expectation}
In this part, we review some definitions and conclusions about
$G$-expectation and rough path theory. See excellent lecture notes as \cite{FH14,FV10,LQ02,P07a,P10} for details.\\


\subsection{The rough path theory}

A (level-$2$) rough path on some interval $[0,T]$ with values in $\R^d$ includes
a continuous path $X:[0,T] \rightarrow \R^d,$ and a ``second order'' mapping $\mathbb{X}:[0,T]^2 \rightarrow
\R^d\otimes \R^d,$ which satisfies the algebraic condition,
\begin{equation}\label{chen}
\X_{s,t}-\X_{s,u}-\X_{u,t}= X_{s,u} \otimes X_{u,t},
\end{equation}
and the analytic condition
\be\label{norm-r}
\|X\|_\alpha :=\sup_{0\leq s <t \leq T}
\frac{|X_{s,t}|}{|t-s|^\alpha} < \infty,
\ \ \ 
\|\X\|_{2\alpha} := \sup_{0\leq s \neq t \leq T}
\frac{|\X_{s,t}|}{|t-s|^{2\alpha}}<\infty,
\ee
where $X_{s,t}:=X_t- X_s.$\\

 In the sequel, we always suppose $\alpha \in (\frac13,\frac12)$ for the need of well-established level-$2$ rough path theory. Denote $\oc^{\alpha}(\R^d)$ the space of $\alpha-$H\"older continuous paths and $\FC^{\alpha}(\R^d)$ the space of level-$2$ rough paths. For $\BX=(X,\X),\  \tBX=(\tX, \tilde{\X}) \in \FC^\alpha, $ we equip it with a homogeneous semi-norm 
$$
\|\BX \|_{\FC^{\alpha}}:= \| X\|_{\alpha}+(\| \X \|_{2\alpha}
)^{\frac12}.
$$
and an inhomogeneous metric,
$$
\varrho_\alpha(\BX,\tBX):=\|X-\tX\|_{\alpha} + \|\X-\tilde{\X}\|_{2\alpha}.
$$

A $\alpha-$H\"older continuous path $Y \in \oc^{\alpha}(\R^m) $ is said to be controlled by
$X \in \oc^{\alpha}(\R^d)  ,$ if there exists $Y' \in
\oc^\alpha([0,T], {L}(\R^d,\R^m)),$ such that the remainder
term
$$
R_{s,t}^Y:=Y_{s,t}-Y'_s X_{s,t},
$$
satisfies $\|R^Y\|_{2\alpha} < \infty,$ where $\| \cdot \|_{2\alpha}$ is defined as in \eqref{norm-r}.
Denote the collection of controlled rough paths by $\mathcal
{D}^{2\alpha}_X([0,T],\R^m).$ For $(Y,Y') \in \mathcal
{D}^{2\alpha}_X([0,T],\R^m),$ we define its semi-norm by
$\|Y,Y'\|_{X,2\alpha}:=\|Y'\|_\alpha + \|R^Y\|_{2\alpha}.$ For example, given any $F \in \oc_b^2(\R^d,\R^m),$ the set of
bounded functions from $\R^d$ to $\R^m$ with bounded derivatives up
to order 2, one can easily check that $(Y,Y'):=(F(X),DF(X)) \in
\mathcal {D}^{2\alpha}_X([0,T],\R^m).$

\subsection{The $G$-expectation theory}

Let $\Omega= \oc^0(\R^+,\R^d)$ the space of $\R^d$ valued
continuous paths $(\omega)_{t\geq0}$ vanishing at the origin. Denote
the coordinate process by $B_t$ and $u^{\varphi(\cdot)}(t,x)$ the
unique viscosity solution to the following $G$-heat equation with initial condition $\varphi.$
\begin{equation}\label{Gheat}
\partial_t u-G(D_x^2u)=0, \ u(0,x)=\varphi(x),
\end{equation}
where 
\be
G(A)=\frac12 \sup_{\gamma \in \Gamma} tr(A\gamma \gamma'), \text{for any}\ A
\in \mathbb{S}_d,
\ee
and $\Gamma $ is a bounded, convex and closed subset of $\R^{d\times d}.$ Define $L_{ip}(\Omega_T):=\{
\varphi(B_{t_1\wedge T},...,B_{t_k\wedge T}): k\in \mathbb{N},
t_1,...t_k \in [0,\infty), \varphi\in \oc_{b.Lip}(\R^{k \times d})
\}$ for any $T>0$ and $L_{ip}(\Omega):= \bigcup_{n=1}^\infty
L_{ip}(\Omega_n) .$ We define a mapping $\E$ from $L_{ip}(\Omega)$
to $\R$ by defining $\E[\varphi(B_t)]:=u^{\varphi(\cdot)}(t,0)$ and recursively solving the $G$-heat equation for general elements:
\begin{equation}
\E[\varphi(B_{t_1}, ...,B_{t_n}-B_{t_{n-1}})]:=
\E[\varphi^{t_n-t_{n-1}}(B_{t_1},...,B_{t_{n-1}}-B_{t_{n-2}})],
\end{equation}
where $\varphi^{t_n-t_{n-1}}(x_1,...,x_{n-1}):=u^{\varphi(x_1,...,x_{n-1},\cdot)}(t_n-t_{n-1},0).$ One can check that $\E[\cdot]$ is well defined and it is a sublinear expectation on $L_{ip}(\Omega).$ For each $p \geq 1,$ $L_G^p(\Omega_T)$ denotes the completion of the
linear space $L_{ip}(\Omega_T),$ under norm $\| \cdot
\|_{L_G^p}:=\{\E[|\cdot|^p]\}^{\frac{1}{p}}.$  
Denote the Wiener measure by $P^0.$ Then we have the following
representation of $\E$ according to \cite{DHP11}.

\begin{thm}\label{Grep}\quad
For any time sequence
$0=t_0 < t_1...<t_k ,$ one has
\begin{eqnarray*}
\E[\varphi(B_{t_0,t_1},...,B_{t_{k-1},t_k})] &=& \sup_{a \in
\mathcal {A}^{\Gamma} } E_{P^0}[\varphi(\int_0^{t_1}a_s
dB_s,...,\int_{t_{k-1}}^{t_{k}}a_s dB_s )]  \\
&=& \sup_{P^{a} \in \mathscr{P}^{\Gamma} }
E_{P^a}[\varphi(B_{t_0,t_1},...,B_{t_{k-1},t_k})] ,
\end{eqnarray*}

where $\mathcal {A}^{\Gamma}$ is the set of progressively measurable
processes with values in $\Gamma$ and $\mathscr{P}^\Gamma$ is the
set of laws of $\int_0^. a_s dB_s$ with $a \in \mathcal
{A}^{\Gamma}$ under Wiener measure. Furthermore,
$\mathscr{P}^{\Gamma}$ is tight.

\end{thm}

%
%

According to this representation, one could extend $\E$ from $L_G^p$ to any
Borel measurable random variable by defining
\be\label{E-sup}
\|\cdot\|_{\mathbb{L}^p}:=\sup_{P^{a} \in
\mathscr{P}^{\Gamma} } E_{P^a}^{\frac{1}{p}}[|\cdot|^p].
\ee

Next, we introduce the notation of {\it capacity} corresponding to the $G$-expectation
and give the description of $L_G^p.$ Denote $\mathscr{B}(\Omega_T)$ the Borel $\sigma$ algebra on $\Omega_T$. Define
\be
\hc(A):= \sup_{P \in \mathscr{P}^{\Gamma} }P(A), \ \ \text{for} \ A \in
\mathscr{B}(\Omega_T).
\ee
A property is said to hold ``quasi-surely''(q.s.) with respect to
$\hc$, if it holds true outside a $\hc$-polar set (Borel set with
capacity 0), and is denoted by $\hc-q.s..$\\
A process $Y$ on $[0,T]$ is said to be a quasi-surely modification
of another process $X,$ if for any $t \in [0,T]$
$$
Y_t=X_t, \ \ \ \hc-q.s..
$$
According to representation \eqref{Grep}, one probably would ask whether a measurable r.v. $\xi \in L_G^p$ if and only if 
$\| \xi \|_{\mathbb{L}^p}< \infty.$ However, the answer is no and for $\xi \in L_G^p,$ it has a uniform regularity property with respect to singular measures $\mathscr{P}^{\Gamma},$ known as {\it quasi-continuity}.

\begin{defi}\label{cts-qs}
Equip the space $\Omega_T$ with the uniform topology. A mapping $\xi$
on $\Omega_T$ with values in $\R $ is said to be quasi-continuous if
for any $\varepsilon >0,$ there exists an open set $O,$ with
$\hc(O)< \vep$ such that $\xi$ is continuous in $O^c.$
\end{defi}

\begin{defi}
One says that $\xi:\Omega_T \rightarrow \R$ has a quasi-continuous
version if there exists a quasi-continuous function $\eta,$ such that
$\xi=\eta,  \ \hc-q.s..$

\end{defi}

\begin{thm}\label{repG}
One has the following representation for $L_G^1,$
$$
L_G^1(\Omega_T)=\{ \xi \in \mathscr{B}(\Omega_T) : \xi \text{ has a
quasi-continuous version, } \lim_{n\rightarrow \infty} \|
|\xi |1_{\{| \xi |>n\} } \|_{\mathbb{L}^1}=0 \}.
$$
\end{thm}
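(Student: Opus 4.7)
The plan is to prove the two inclusions separately: the subset direction is a relatively standard consequence of Markov's inequality for the capacity $\hc$ and Borel--Cantelli, while the reverse inclusion crucially exploits the tightness of $\mathscr{P}^\Gamma$ from Theorem~\ref{Grep}.

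For $L_G^1(\Omega_T)$ contained in the right-hand side, take $X\in L_G^1(\Omega_T)$ and pick cylinder approximants $X_k\in L_{ip}(\Omega_T)$ with $\E[|X-X_k|]\leq 4^{-k}$. Each $X_k$ is bounded and continuous on $\Omega_T$ for the uniform topology, hence quasi-continuous. The Markov-type bound $\hc(|Y|>a)\leq a^{-1}\E[|Y|]$, immediate from $\hc=\sup_{P\in\mathscr{P}^\Gamma}P$ together with classical Markov, then gives $\hc(\{|X-X_k|>2^{-k}\})\leq 2^{-k}$. Setting $A_N:=\bigcup_{k\geq N}\{|X-X_k|>2^{-k}\}$, subadditivity forces $\hc(A_N)\leq 2^{1-N}$, and on $A_N^c$ the sequence $X_k$ converges uniformly to $X$, so $X|_{A_N^c}$ is continuous and thus $X$ admits a quasi-continuous version. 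For the tail condition I would split
\[
|X|1_{\{|X|>N\}}\;\leq\;|X-X_k|+|X_k|1_{\{|X|>N\}};
\]
for $N>2\|X_k\|_\infty$ the inclusion $\{|X|>N\}\subseteq\{|X-X_k|>N/2\}$ combined with a second Markov estimate controls the second summand, and choosing $k$ large before $N$ closes the step.

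For the reverse inclusion, after passing to the quasi-continuous version I truncate $X_n:=(-n)\vee X\wedge n$. Then $|X-X_n|\leq|X|1_{\{|X|>n\}}$, so the tail hypothesis delivers $\|X-X_n\|_{\mathbb{L}^1}\to 0$, reducing the problem to showing that every bounded quasi-continuous $Y$ belongs to $L_G^1(\Omega_T)$. Fixing such $Y$ with $|Y|\leq M$ and $\varepsilon>0$, tightness of $\mathscr{P}^\Gamma$ furnishes a compact $K\subset\Omega_T$ with $\hc(K^c)<\varepsilon$, while quasi-continuity furnishes an open $O$ with $\hc(O)<\varepsilon$ on whose complement $Y$ is continuous. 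Thus $Y|_{K\cap O^c}$ is bounded continuous on a compact set; Tietze's extension theorem yields $Z\in C_b(\Omega_T)$ with $|Z|\leq M$ and $\{Y\neq Z\}\subseteq K^c\cup O$, so that $\|Y-Z\|_{\mathbb{L}^1}\leq 2M\hc(K^c\cup O)<4M\varepsilon$.

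It finally remains to approximate $Z\in C_b(\Omega_T)$ in $\mathbb{L}^1$-norm by elements of $L_{ip}(\Omega_T)$. I would use the dyadic piecewise-linear projection $\pi_m(\omega)$ on $[0,T]$ and set $Z^{(m)}(\omega):=Z(\pi_m(\omega))=\psi_m(B_{T/2^m},\dots,B_T)$ for a bounded continuous $\psi_m$ on $\R^{2^m\times d}$. Arzel\`a--Ascoli applied to the compact $K$ gives $\pi_m\to\mathrm{id}$ uniformly on $K$, hence $Z^{(m)}\to Z$ uniformly on $K$; combined with the uniform bound $|Z^{(m)}|\leq M$ and $\hc(K^c)<\varepsilon$ this forces $\|Z^{(m)}-Z\|_{\mathbb{L}^1}\to 0$. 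A further mollification of $\psi_m$ on $\R^{2^m\times d}$, together with tightness of the pushed-forward family $\{(B_{T/2^m},\dots,B_T)_* P:P\in\mathscr{P}^\Gamma\}$ (which follows from the uniform second-moment estimates on $B_t$ under $\mathscr{P}^\Gamma$), turns $Z^{(m)}$ into a genuine element of $L_{ip}(\Omega_T)$ within any prescribed tolerance. The principal obstacle is this last paragraph: reconciling the uniform topology in which quasi-continuity is formulated with the finite-dimensional cylinder structure of $L_{ip}(\Omega_T)$, which forces the combined use of tightness of $\mathscr{P}^\Gamma$, Arzel\`a--Ascoli, Tietze and mollification.
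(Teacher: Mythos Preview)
The paper does not actually prove this theorem: it is quoted in the preliminaries section as a known result, with the attribution ``The following representation theorems for G-expectation $\E$ and $L_G^1$ is taken from \cite{DHP11}.'' There is therefore no proof in the present paper to compare your proposal against.

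That said, your reconstruction is essentially the argument of Denis--Hu--Peng \cite{DHP11}. The forward inclusion via Markov plus Borel--Cantelli, and the reverse via truncation, tightness of $\mathscr{P}^\Gamma$, Tietze extension on a compact, and then cylinder approximation of bounded continuous functions, are exactly the ingredients used there. One small point worth tightening in your forward step: the sets $A_N$ you build are not obviously open, whereas quasi-continuity asks for an open exceptional set; you either need to invoke outer regularity of $\hc$ (a Choquet-capacity property established in \cite{DHP11} from the tightness of $\mathscr{P}^\Gamma$) or arrange the exceptional sets to be open from the start. With that adjustment your sketch is sound.
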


%
%
%


Denote $M_G^{p,0}(0,T)$ the collection of processes with form
$$
\eta_t(\omega)=\sum_{i=0}^{N-1} \xi_i(\omega)1_{[t_i,t_{i+1})}(t),
$$
for a partition $\{0=t_0 <...<t_N=T\}$ and $\xi_i \in
L_{ip}(\Omega_{t_i}),i=0,...,N-1.$ Then denote by $M_G^{p}(0,T)$ the
completion of $M_G^{p,0}(0,T)$ under norm
$\|\cdot\|_{M_G^p}:=\{\E\int_0^T |\eta_s|^p ds\}^{\frac{1}{p}}.$
One can define stochastic integrals for elements in $M_G^p$ (see \cite{P07a},\cite{P08a},\cite{P10}). 
In the following, we denote $\bar{\sigma}=[\E|B_1|^2]^\frac12.$ For positive integers $k, n, m$ and a Euclidean space $H$, we write $C_b^k(\R^n, H)$ to denote functionals from $\R^n$ to $H$ with bounded derivatives up to $k-$the order, and $L(  \R^m) $ to denote linear functions on $\R^m.$

\section{Main Result}

\subsection{Wong-Zakai approximation in $G$-framework }

In this part, we consider piecewisely linearized approximation to $G-$SDEs. Suppose $B$ is the $d-$dimensional $G-$Brownian motion on $[0,1].$ $\{t^{(n)}_j\}_{j=0}^n$ is the partition with mesh size $1/n$
and $B_t^{(n)}$ is the piecewise linearization of $G$-Brownian
motion according to $\{t^{(n)}_j\}_{j=0}^n.$ Consider the
following ODEs with initial condition $y_0 \in \R^m,$
\begin{equation} \label{ode}
dY_t^{(n)}= f(Y_t^{(n)})d B_t^{(n)} + g(Y_t^{(n)})d\langle B
\rangle_t + h(Y_t^{(n)}) dt.
\end{equation}

It is clear that for any $n,$ $Y^{(n)}$ can be defined pathwisely. The following lemma proves that $Y^{(n)}_t(\omega)$ is indeed quasi-continuous as a function on $\Omega_T.$

%
%
%
%
%
%

\begin{lem}\label{included-in-LG}
Assume $f\in C_b^1(\R, L(\R^d)), \ g\in C_b^1(\R, L(\R^{d\times d})), \ h\in C_b^1(\R, \R) $. For any fixed $n,$ one has
$Y_t^{(n)} \in L_G^2(\Omega_{t_{j+1}^{(n)}}),$ for any
$t\in[t_j^{(n)},t_{j+1}^{(n)}].$

\end{lem}

\begin{proof}
By induction, one only needs to show for any $n\geq1,$ $j=0,...,n-1,$
$Y^{(n)}_{t^{(n)}_{j+1}} \in L_G^1(\Omega_{t_{j+1}^{(n)}}).$ In the
following proof, we omit $(n)$ in $Y^{(n)}$ and $t^{(n)}_{j}$ for
simplicity, i.e. suppose $Y_t$ solves the following $G-$stochastic ODE pathwisely,
\begin{equation}\label{ode1}
Y_t=Y_{t_j}+\frac{B_{t_j,t_{j+1}}}{t_{j+1}-t_j} \int_{t_j}^t
f(Y_s)ds + \int_{t_j}^t g(Y_s)d\langle B\rangle_s + \int_{t_j}^t
h(Y_s)ds,\ \ t\in[t_j,t_{j+1}],
\end{equation}
and $Y_{t_j}\in L_G^2(\Omega_{t_j}).$ Consider the discretization of \eqref{ode1},
\begin{eqnarray}\label{dode}
y_t^{(m)}&=&y^{(m)}_{k-1} + \frac{B_{t_j,t_{j+1}}}{t_{j+1}-t_j}
f(y_{k-1}^{(m)})(t-\tau_{k-1}^{(m)}) + g(y_{k-1}^{(m)})\langle
B\rangle_{\tau_{k-1}^{(m)},t} \nonumber \\
         &+& h(y_{k-1}^{(m)})
(t-\tau_{k-1}^{(m)}),\ \ t \in [\tau_{k-1}^{(m)},\tau_k^{(m)}],
\end{eqnarray}
where $\{\tau_k^{(m)}\}_{k=0}^m$ is the partition of $[t_j,t_{j+1}]$
with mesh-size $\frac{1}{m}$, and
$y_k^{(m)}:=y^{(m)}_{\tau_k^{(m)}},$ $y^{(m)}_{t_j}:=Y_{t_j}.$\\

It holds that
\begin{eqnarray*}
&&Y_{t_{j+1}}-y_{t_{j+1}}^{(m)}  =  \sum_{i=0}^{m-1}
[\frac{B_{t_j,t_{j+1}}}{t_{j+1}-t_j}
\int_{\tau_i^{(m)}}^{\tau_{i+1}^{(m)}} (f(Y_s)-f(y_i^{(m)}))ds\\
 &+&
\int_{\tau_i^{(m)}}^{\tau_{i+1}^{(m)}} (g(Y_s)-g(y_i^{(m)}))d\langle
B \rangle_s + \int_{\tau_i^{(m)}}^{\tau_{i+1}^{(m)}}
(h(Y_s)-h(y_i^{(m)}))ds]\\
&=& \sum_{i=0}^{m-1} \{ \frac{B_{t_j,t_{j+1}}}{t_{j+1}-t_j}
\int_{\tau_i^{(m)}}^{\tau_{i+1}^{(m)}} (f(Y_s)-f(y_s^{(m)}))ds +
\int_{\tau_i^{(m)}}^{\tau_{i+1}^{(m)}} (g(Y_s)-g(y_s^{(m)}))d\langle
B \rangle_s \\
&+& \int_{\tau_i^{(m)}}^{\tau_{i+1}^{(m)}} (h(Y_s)-h(y_s^{(m)}))ds +
\frac{B_{t_j,t_{j+1}}}{t_{j+1}-t_j}
\int_{\tau_i^{(m)}}^{\tau_{i+1}^{(m)}}
(f(y_s^{(m)})-f(y_i^{(m)}))ds \\
&+& \int_{\tau_i^{(m)}}^{\tau_{i+1}^{(m)}}
(g(y_s^{(m)})-g(y_i^{(m)}))d\langle B \rangle_s +
\int_{\tau_i^{(m)}}^{\tau_{i+1}^{(m)}} (h(y_s^{(m)})-h(y_i^{(m)}))ds
\}.
\end{eqnarray*}

By the uniform bound $|\langle B \rangle_t| \leq \bar{\sigma}^2 t,$ where $\bar{\sigma}= [\E|B_1|^2]^{\frac{1}{2}},$ and a direct calculation, one has
\begin{equation}\label{2}
\int_{\tau_i^{(m)}}^{\tau_{i+1}^{(m)}} |y_s^{(m)}-y_i^{(m)}| ds \leq
C
(1+|Y_{t_j}|)e^{C(1+\frac{|B_{t_j,t_{j+1}}|}{t_{j+1}-t_j})(t_{j+1}-t_j)}(\tau_{i+1}^{(m)}-\tau_i^{(m)})^2.
\end{equation}
From here on $C$ is a generic constant. According to \eqref{2} and Lipschitzness for $f,g,h,$ one obtains 
\begin{eqnarray*}
|Y_{t_{j+1}}-y_{t_{j+1}}^{(m)}| &\leq &C
(1+\frac{|B_{t_j,t_{j+1}}|}{t_{j+1}-t_j}) \int_{t_j}^{t_{j+1}} |Y_s
-y_s^{(m)}| ds \\
&+& C (1+|Y_{t_j}|)e^{C(1+\frac{|B_{t_j,t_{j+1}}|}{t_{j+1}-t_j})(t_{j+1}-t_j)}
\frac{1}{m} (t_{j+1}-t_j).
\end{eqnarray*}

Notice that $t_{j+1}$ can be replaced by any $t\in[t_j,t_{j+1}].$ By Gronwall's inequality, one has the following inequality,
$$
|Y_{t_{j+1}}-y_{t_{j+1}}^{(m)}| \leq \frac{C}{m}
(1+|Y_{t_j}|) e^{C((t_{j+1}-t_j)+
|B_{t_j,t_{j+1}}|)}.
$$
By taking expectation one obtains
$
\E|Y_{t_{j+1}}-y_{t_{j+1}}^{(m)}|^2 \leq \frac{C}{m^2},
$
which implies our result since $y_t^{(m)} \in L_G^2(\Omega_{t_{j+1}}),$ for any $ t \in
[t_j,t_{j+1}].$
\end{proof}

Thanks to Lemma \ref{included-in-LG}, the proof of the following theorem is an adaptation of the classical case in $G-$framework.

\begin{thm}\label{wzthm}
Suppose $f\in C_b^2(\R, L(\R^d)), \ g\in C_b^1(\R, L(\R^{d\times d})), \ h\in C_b^1(\R, \R) $, and $Y_t^{(n)}$ solves ODEs \eqref{ode} $\hc-$quasi surely. $X_t$ solves the following $G-$SDE of Stratonovich's kind,
\begin{equation}\label{gssde}
X_t=x_0 + \int_0^t f( X_s) \circ dB_s + \int_0^t g( X_s)d\langle
B\rangle_s + \int_0^t h( X_s) ds.
\end{equation}
Then for any $t\in [0,1],$ $Y_t^{(n)}$
converges to $X_t$ in $L_G^2$-norm sense. Furthermore, for any $t
\in [0,1],$ one has the following inequality,
\begin{equation}
\E[(Y_t^{(n)}-X_t)^2] \leq K \frac{1}{\sqrt{n}},
\end{equation}
where $K$ depends on $f,g,h$ and $\bar{\sigma}.$

\end{thm}

\begin{proof}

Without loss of generality, we suppose $g,h=0$ for simplicity.
In the following, $K$ denotes a generic constant and may be different from line
to line. Consider the Maruyama's approximation to $G$-Stratonovich SDE
\eqref{gssde},
\begin{equation}\label{maru}
dX_t^{(n)}=X_{j-1}^{(n)} + f(X_{j-1}^{(n)}) B_{t^{(n)}_{j-1},t} +
\frac12 Df(X_{j-1}^{(n)})f(X_{j-1}^{(n)})\langle B
\rangle_{t_{j-1}^{(n)},t}, \ \ t\in [t_{j-1}^{(n)},t_{j}^{(n)}],
\end{equation}
where $X_{j-1}^{(n)}=X^{(n)}_{t^{(n)}_{j-1}},$ $j=1...n.$ By Maruyama's approximation in $G$-framework, i.e. Theorem 7 in Part 3
of \cite{GQY14}, one have
$
\E[\sup_{t\in[0,T]}|X_t-X_t^{(n)}|^2] \leq K(\frac{1}{n})
$
so one only needs to show
\begin{equation}\label{wz1}
\E(Y_t^{(n)}- X_t^{(n)})^2 \leq K \frac{1}{\sqrt{n}}, \ \  \forall t
\in [0,1].
\end{equation}
Note that for any $t \in [t_{j}^{(n)}, t_{j+1}^{(n)}),$ one has estimates
\be
\E(Y_t^{(n)}-Y_j^{(n)})^2 \leq \frac{K}{n},\ \ \E(X_t^{(n)}-X_j^{(n)})^2 \leq \frac{K}{n}.
\ee
It suffices to prove \eqref{wz1} for $t=t_{j}^{(n)}.$ Firstly, one
has the following identity,
\begin{eqnarray}
Y_j^{(n)} - X_j^{(n)} &=& (Y_{j-1}^{(n)}-X_{j-1}^{(n)})+
(f(Y_{j-1}^{(n)}) - f(X_{j-1}^{(n)}))B_{t_{j-1}^{(n)}, t_j^{(n)}} \label{wze1}\\
&+&
\int_{t_{j-1}^{(n)}}^{t_j^{(n)}} (f(Y_s^{(n)})-f(Y_{j-1}^{(n)}))
ds \frac{B_{t_{j-1}^{(n)}, t_j^{(n)}}}{t_j^{(n)}- t_{j-1}^{(n)}} \label{wze3} \\
&-& \frac12 Df(X_{j-1}^{(n)})f(X_{j-1}^{(n)})\langle B
\rangle_{t_{j-1}^{(n)},t_j^{(n)}}. \label{wze4}
\end{eqnarray}
By Taylor's expansion, for \eqref{wze3}, one has
\begin{eqnarray}
&&
\int_{t_{j-1}^{(n)}}^{t_j^{(n)}} (f(Y_s^{(n)})-f(Y_{j-1}^{(n)}))
ds \frac{B_{t_{j-1}^{(n)}, t_j^{(n)}}}{t_j^{(n)}- t_{j-1}^{(n)}} \nonumber \\
&=&
\int_{t_{j-1}^{(n)}}^{t_j^{(n)}} f'(Y^{(n)}_{\tau_s}) (Y_s^{(n)}
-Y_{j-1}^{(n)} )ds \frac{B_{t_{j-1}^{(n)}, t_j^{(n)}}}{t_j^{(n)}- t_{j-1}^{(n)}} \nonumber \\
&=&  \int_{t_{j-1}^{(n)}}^{t_j^{(n)}}
Df(Y^{(n)}_{\tau_s}) \int^s_{t_{j-1}^{(n)}} f(Y_r^{(n)}) dr ds \Big(\frac{B_{t_{j-1}^{(n)}, t_j^{(n)}}}{t_j^{(n)}-
t_{j-1}^{(n)}}\Big)^{\otimes 2}
\label{wze33},
\end{eqnarray}
where $Y^{(n)}_{\tau_s}= Y^{(n)}_{j-1} + \theta (Y^{(n)}_{s} -
Y^{(n)}_{j-1}), $ $\theta \in (0,1).$
By subtracting \eqref{wze4} from \eqref{wze33} and inserting terms,
one obtains that
\begin{eqnarray*}
&&Y_j^{(n)} - X_j^{(n)}= (Y_{j-1}^{(n)}-X_{j-1}^{(n)})
+(f(Y_{j-1}^{(n)}) - f(X_{j-1}^{(n)}))B_{t_{j-1}^{(n)}, t_j^{(n)}}\\
&+& \frac12
[Df(Y^{(n)}_{j-1})f(Y^{(n)}_{j-1})-Df(X^{(n)}_{j-1})f(X^{(n)}_{j-1})]\langle B \rangle_{t_{j-1}^{(n)},t_j^{(n)}}\\
&+&\frac12
Df(Y^{(n)}_{j-1})f(Y^{(n)}_{j-1})(B_{t_{j-1}^{(n)}, t_j^{(n)}}^{\otimes2}-
\langle B \rangle_{t_{j-1}^{(n)},t_j^{(n)}})\\
&+&
\int_{t_{j-1}^{(n)}}^{t_j^{(n)}}(s-t_{j-1}^{(n)})[Df(Y^{(n)}_{\tau_s})f(Y^{(n)}_{\tau_s})-
Df(Y^{(n)}_{j-1})f(Y^{(n)}_{j-1})]ds \Big(\frac{B_{t_{j-1}^{(n)}, t_j^{(n)}}}{t_j^{(n)}-
t_{j-1}^{(n)}}\Big)^{\otimes2}\\
&+&
\int_{t_{j-1}^{(n)}}^{t_j^{(n)}}\int_{t_{j-1}^{(n)}}^{
s}Df(Y^{(n)}_{\tau_s})[f(Y^{(n)}_{r})-f(Y^{(n)}_{\tau_s})]drds\Big(\frac{B_{t_{j-1}^{(n)}, t_j^{(n)}}}{t_j^{(n)}-
t_{j-1}^{(n)}}\Big)^{\otimes2}.
\end{eqnarray*}
Denote the above six terms as $\vep^{(n,j)}_{l},$ $l=1...6.$
Firstly, by $f  \in \oc_b^2,$ it is clear that
\be
\E|\vep^{(n,j)}_{l}|^2  \leq  K (\frac{1}{n})^3, \ l=5,6, \ \ \E|\vep^{(n,j)}_{l}|^2  \leq K (\frac{1}{n})^2, \ \ l=4.
\ee
Secondly, by Lipschitzness of $f $ one has
$$
\E|\vep^{(n,j)}_{1}|^2+\E|\vep^{(n,j)}_{2}|^2+\E|\vep^{(n,j)}_{3}|^2+2
\E(\vep^{(n,j)}_{1}\vep^{(n,j)}_{3}) \leq (1+\frac{K}{n})
\E|Y_{j-1}^{(n)} - X_{j-1}^{(n)}|^2.
$$
By Lemma \ref{included-in-LG}, $Y^{(n)}_{k-1} \in
L_G^2(\Omega_{t^{(n)}_{k-1}}),$ which is independent from
$B_{t_{j-1}^{(n)}, t_j^{(n)}}$ and $\langle B
\rangle_{t_{j-1}^{(n)}, t_j^{(n)}}, $ so one gets
$
\E(\vep^{(n,j)}_{1} \vep^{(n,j)}_{l})=0, \ l=2,4.
$
Also, note that
\begin{eqnarray*}
\E| B_{t_{j-1}^{(n)}, t_j^{(n)}}[(B_{t_{j-1}^{(n)},
t_j^{(n)}})^{\otimes2}-\langle B \rangle_{t_{j-1}^{(n)},
t_j^{(n)}}]|&=&\E|[B_{t_{j-1}^{(n)}, t_j^{(n)}}
\int_{t_{j-1}^{(n)}}^{t_{j}^{(n)}} B_{t_{j-1}^{(n)},r}dB_r]| \leq  K (\frac{1}{n})^{\frac32},\\
 \E|\langle B \rangle_{t_{j-1}^{(n)}, t_j^{(n)}} [(B_{t_{j-1}^{(n)},
t_j^{(n)}})^{\otimes2}-\langle B \rangle_{t_{j-1}^{(n)}, t_j^{(n)}}]| &\leq& K
(\frac{1}{n})^{2},
\end{eqnarray*}
and one obtains
$$
|\E[\vep^{(n,j)}_{4}\vep^{(n,j)}_{l}]| \leq K (\frac{1}{n})^{\frac32},
\ \ l=2,3.
$$
As for other intersection terms concerning
$\vep^{(n,j)}_{5},\vep^{(n,j)}_{6},$ one applies H\"{o}lder's
inequality since $\E|\vep^{(n,j)}_{1}|^2 $ is bounded
and obtains
$$
\E[\vep^{(n,j)}_{k}\vep^{(n,j)}_{l}] \leq K (\frac{1}{n})^{\frac32},
\ \ l=1...6, k=5,6.
$$
 Finally, one gets
\begin{eqnarray*}
\E|Y_j^{(n)} - X_j^{(n)}|^2 &\leq& (1+\frac{K}{n}) \E|Y_{j-1}^{(n)}
- X_{j-1}^{(n)}|^2 + K (\frac{1}{n})^{\frac32} \label{wzl} \\
&\leq & \sum_{i=0}^{j-1} (1+\frac{K}{n})^{i} K
(\frac{1}{n})^{\frac32} \leq \frac{K}{\sqrt{n}}.
\end{eqnarray*}

\end{proof}

\subsection{The Convergence Rate under Uniform Norm for Wong-Zakai Approximation  }

In this part, we will calculate the quasi-surely convergence rate
for the Wong-Zakai approximation by rough path theory. To this end, according to the rough path argument, one needs to calculate $\varrho(\BB,\BB^{(n)}).$ 
For $X(\omega): [0,T] \rightarrow \R^d$ and $\X(\omega): [0,T]^2
\rightarrow \R^{d\times d}$ with $X_t \in L_G^q(\Omega_T),
\X_{s,t} \in L_G^{\frac{q}{2}}(\Omega_T),\forall s,t \in [0,T].$ If $\BX:=(X, \X)$
satisfies relation $(\ref{chen})$ quasi-surely, and furthermore, 
\begin{equation}\label{bound-lp}
\|X_{s,t}\|_{L_G^q} \leq C|t-s|^{\beta}, \;
\|\X_{s,t}\|_{L_G^{\frac{q}{2}}} \leq C |t-s|^{2\beta},
\end{equation}
with some constant C. Then according to Theorem 3.1 in \cite{PZ16}, $\BX \in \FC^{\alpha}$ quasi-surely for any $\alpha \in
(\frac13,\beta - \frac{1}{q}).$ Furthermore, we have the following Kolmogorov criterion under rough path distance in our nonlinear expectation framework, the proof of which follows similarly as in Theorem 3.3 of \cite{FH14}.


\begin{thm}\label{Kolc}

Suppose $\BX=(X, \X), \ \tilde{\BX}=(\tX, \tilde{\X})$ satisfy \eqref{chen} and \eqref{bound-lp}. Let $\Delta X= \tilde{X}- X$ and $\Delta \X= \tilde{\X}- \X
$ and assume that for some $\varepsilon >0,$ one has bounds,
\be
\|\Delta X_{s,t}\|_{L_G^q} \leq  \varepsilon |t-s|^\beta,\ \  \|\Delta
\X_{s,t}\|_{L_G^{\frac{q}{2}}} \leq  \varepsilon |t-s|^{2\beta}.
\ee
Then for any $\alpha \in [0, \beta-
\frac{1}{q}),$ there exists a constant $M>0,$ depending on $C,\alpha,\beta ,q,$
such that
\be\label{delta-x}
\|\|\Delta X \|_{\alpha}\|_{\mathbb{L}^q} \leq M \varepsilon,\ \ 
\|\|\Delta \X \|_{2\alpha}\|_{\mathbb{L}^{\frac{q}{2}}} \leq M
\varepsilon.
\ee
In particular, if $\|\Delta \X_{s,t}\|_{L_G^{q}} \leq  \varepsilon |t-s|^{2\beta}$ and $\beta - \frac{1}{q} > \frac13,$ then for any
$\alpha \in (\frac13, \beta - \frac{1}{q}),$
$\| \varrho_\alpha (\BX, \tilde{\BX} )  \|_{\mathbb{L}^q} \leq M
\vep.$
\end{thm}

Let $  \B^{strat}_{s,t}=\int_s^t B_{s,r} \circ dB_r.$ Denote $ \BB^{strat}=(B,\B^{strat}), $ which can be lifted to a rough path quasi-surely by the argument above Theorem \ref{Kolc}. Let $\B^{(n)}_{s,t}:= \int_s^t B^{(n)}_{s,r} \otimes   dB^{(n)}_r$ and $\BB^{(n)}=(B^{(n)}, \B^{(n)}).$
In the following we give a direct calculation of the quasi-surely convergence rate
for $\BB^{(n)}$ to $\BB^{strat}$ under $\mathbb{L}^p$ metric, by sharply applying the properties of $G-$Brownian motion, instead of the Gaussianity as in the classical case.

\begin{prop}\label{B converge}
Fix $\theta < \frac12 - \alpha $ and $q\geq 2,$ $\BB^{(n)}$ converges to $\BB^{strat}$ under
$\alpha-$H\"{o}lder rough norm in the $\mathbb{L}^q$ sense. Furthermore,
one has the following inequalities,
\begin{eqnarray}
\| \varrho_\alpha (\BB^{strat} , \BB^{(n)}) 
\|_{\mathbb{L}^q} \leq  K (\frac{1}{n})^\theta, \label{BB1} \\
\varrho_\alpha(\BB^{strat}, \BB^{(n)}) \leq M (\frac{1}{n})^\theta,
\ \ \hc-q.s., \label{BB2}
\end{eqnarray}
where $K$ depends on $\bar{\sigma},$ and $M$ depends on
$\bar{\sigma}$ and the path $\omega.$
\end{prop}

\begin{proof}
To show inequality \eqref{BB1}, according to Theorem \ref{Kolc},
by taking $\beta=\frac12- \theta,$ it suffices to show
\begin{equation}
\|  \Delta B^{(n)}_{s,t}  \|_{\mathbb{L}^q} \leq K (\frac{1}{n})^\theta (t-s)^{\frac12-\theta}, \ \ \  \|  \Delta \B^{(n)}_{s,t}  \|_{\mathbb{L}^q} \leq
K(\frac{1}{n})^\theta (t-s)^{1-2\theta}, \label{BB12}
\end{equation}
for any $q\geq 2,$ where $\Delta B^{(n)}=B-B^{(n)}$ and $\Delta \B=
\B-\B^{(n)}.$ For the first inequality, if $t_i^{(n)} \leq s <t \leq t_{i+1}^{(n)},$ for some $0\leq
i \leq n-1,$ one has
\begin{eqnarray*}
\Delta B^{(n)}_{s,t} = B_{s,t} -
\frac{t-s}{t_{i+1}^{(n)}-t_i^{(n)}}B_{t_i^{(n)},t_{i+1}^{(n)}}.
\end{eqnarray*}
It follows that
\begin{eqnarray*}
\E|\Delta B^{(n)}_{s,t}|^q &\leq& 2^q [\E|B_{s,t}|^q +
(\frac{t-s}{t_{i+1}^{(n)}-t_i^{(n)}})^q
\E|B_{t_i^{(n)},t_{i+1}^{(n)}}|^q ]\\
& \leq & C_q \bar{\sigma}^q [|t-s|^{\frac{q}{2} } + |t-s|^q
|t_{i+1}^{(n)}-t_i^{(n)}|^{-\frac{q}{2}}]\\
& \leq &    C_q
\bar{\sigma}^q |t-s|^{\frac{q}{2}-q\theta } (\frac{1}{n})^{q\theta},
\end{eqnarray*}
where $C_q$ depends only on $q.$ If $t_{i} \leq s < t_{i+1}<...<t_{k} <t \leq
t_{k+1},$ for some $0\leq i<k\leq n,$ (we omit $(n)$ in $t_j$). Notice that $
\Delta B_{s,t}^{(n)}= \Delta B_{s,t_{i+1}}^{(n)} + \Delta
B_{t_k,t}^{(n)},$
and one could obtain the desired result by the first case.\\
For the second inequality in \eqref{BB12}, suppose $t_{i_0}^{(n)} \leq \tau_0 := s<
\tau_1<...<\tau_{k-1}<t =: \tau_k \leq t_{j_0}^{(n)} ,$ for some
$0\leq i_0<j_0\leq n$ and $|t-s| \ge \frac1n$ (it is much easier if $|t-s| < \frac1n$, indeed, inequality \eqref{BB12} would follows from inequalities as  \eqref{32} and \eqref{33} with $\frac1n$ replaced by $t-s$), where $ \tau_i:=  t_{i_0+i}^{(n)},  \ 1 \le i <k $ for convenience. According to \eqref{chen}, one has the following identity,
\begin{equation}\label{diffchen}
\Delta \B_{s,t}^{ (n)}= \sum_{i=0}^{k-1} \Delta
\B_{\tau_i,\tau_{i+1}}^{ (n)} + \sum_{0\leq i<j \leq k-1}
(B_{\tau_i,\tau_{i+1}} \otimes B_{\tau_j,\tau_{j+1}}-
B^{(n)}_{\tau_i,\tau_{i+1}} \otimes B^{(n)}_{\tau_j,\tau_{j+1}} ).
\end{equation}

For elements $\Delta \B^{l,l,(n)}$ in the diagonal of matrix $\Delta \B^{(n)}_{s,t},$ one has
\be
\B^{l,l,(n)}_{\tau_i,\tau_{i+1}}=\int_{\tau_i}^{\tau_{i+1}}
B_{\tau_i,r}^{l,(n)} d B_r^{l,(n)}=\frac{(B^l_{\tau_i,\tau_{i+1}})^2}{2}, \
\ \ i=1,...,k-2,
\ee
Then the first sum in \eqref{diffchen} on the diagonal is
\begin{equation}
\sum_{i=0}^{k-1} \Delta \B_{\tau_i,\tau_{i+1}}^{(n)}=
\B^{strat}_{s,\tau_1}-\B^{(n)}_{s,\tau_1}+\B^{strat}_{\tau_{k-1},t}-\B^{(n)}_{\tau_{k-1},t}.
\end{equation}

According to Theorem \ref{Grep}, one has the
following inequalities,
\begin{eqnarray}\label{32}
\E|\B^{strat}_{s,\tau_1}|^q \leq C_q \bar{\sigma}^q |\tau_1-s|^q \leq C_q \bar{\sigma}^q (\frac{1}{n})^q,\\ \label{33}
\E|\B^{(n)}_{s,\tau_1}|^q \leq C_q \bar{\sigma}^{2q} |\tau_1-s|^q
\leq C_q \bar{\sigma}^{2q} (\frac{1}{n})^q,
\end{eqnarray}
and similar results for $\B^{strat}_{\tau_{k-1},t}$ and
$\B^{(n)}_{\tau_{k-1},t},$ which implies our desired estimate \eqref{BB12} for elements on the diagonal of $\Delta \B_{s,t}^{(n)}$. For elements $\Delta \B^{m,l,(n)}$ off the diagonal of matrix $\Delta \B^{(n)}_{s,t},$ notice that
$$
\B^{m,l,(n)}_{\tau_i,\tau_{i+1}} - \B^{m,l }_{\tau_i,\tau_{i+1}}= \frac12(\int_{\tau_i}^{\tau_{i+1}} B^l_{\tau_i,r} dB^m_r - \int_{\tau_i}^{\tau_{i+1}} B^m_{\tau_i,r} dB^l_r ).
$$
By applying B-D-G inequality in $G-$framework (see e.g. Remark 2.10 in \cite{HJPS2}), and H\"older inequality, it follows that
\begin{eqnarray}
&&\E|\B^{m,l,(n)}_{\tau_1,\tau_{k-1}}- \B^{m,l }_{\tau_1,\tau_{k-1}} |^q \leq C_q\left( \E|\sum_{i=1}^{k-2}\int_{\tau_i}^{\tau_{i+1}} B^l_{\tau_i,r} dB^m_r |^q + \E|\sum_{i=1}^{k-2}\int_{\tau_i}^{\tau_{i+1}} B^m_{\tau_i,r} dB^l_r|^q\right) \nonumber \\
&\leq& C_q  \left( \E| \sum_{i=1}^{k-2}\int_{\tau_i}^{\tau_{i+1}} (B^m_{\tau_i,r})^2 d\langle B^l\rangle_r |^{\frac q2} +\E| \sum_{i=1}^{k-2}\int_{\tau_i}^{\tau_{i+1}} (B^l_{\tau_i,r})^2 d\langle B^m\rangle_r |^{\frac q2} \right)\nonumber \\
&\le&  C_q \bar{\sigma}^q \Big(  \E (\sum_{i=1}^{k-2} \int_{\tau_i}^{\tau_{i+1}} |B^m_{\tau_i, r}|^q dr ) (\sum_{i=1}^{k-2} \int_{\tau_i}^{\tau_{i+1}}  dr)^{\frac{q}{2}-1} \nonumber\\
&&  +  \E(\sum_{i=1}^{k-2} \int_{\tau_i}^{\tau_{i+1}} |B^l_{\tau_i, r}|^q dr ) (\sum_{i=1}^{k-2} \int_{\tau_i}^{\tau_{i+1}}  dr)^{\frac{q}{2}-1} \Big) \nonumber \\
&\leq& C_q \bar{\sigma}^{2q} |t-s|^{\frac q 2 -1} \Big( \sum_{i=1}^{k-2} \int_{\tau_i}^{\tau_{i+1}} (r-\tau_i)^{\frac q 2} dr  \Big) \nonumber \\
&\leq& C_q \bar{\sigma}^{2q}   |t-s|^{\frac q 2} \frac{1}{n^\frac q 2}.\label{rema1.2}
\end{eqnarray}

As for the second sum in \eqref{diffchen}, note that
$B^{(n)}_{\tau_i}=B_{\tau_i}, \ i=1,...,k-1,$ and one obtains
\begin{eqnarray}
&&\sum_{0\leq i<j \leq k-1} (B_{\tau_i,\tau_{i+1}}
B_{\tau_j,\tau_{j+1}}- B^{(n)}_{\tau_i,\tau_{i+1}}
B^{(n)}_{\tau_j,\tau_{j+1}} ) \nonumber \\
&=& (B_{s,\tau_1}-B_{s,\tau_1}^{(n)}) B_{\tau_1,\tau_{k-1}} +
B_{\tau_1,\tau_{k-1}}(B_{\tau_{k-1},t}-B_{\tau_{k-1},t}^{(n)}) \nonumber \\
&&  +B_{s,\tau_1}B_{\tau_{k-1},t}  - B_{s,\tau_1}^{(n)}B_{\tau_{k-1},t}^{(n)} \nonumber \\
&=&(B_s^{(n)}-B_s)B_{\tau_1,\tau_{k-1}} +
B_{\tau_1,\tau_{k-1}}(B_t-B_t^{(n)}) \nonumber \\
&& +B_{s,\tau_1}B_{\tau_{k-1},t}-B_{s,\tau_1}^{(n)}B_{\tau_{k-1},t}^{(n)}.
\end{eqnarray}
By independence, one can check that
\begin{eqnarray}
\E|(B_s^{(n)}-B_s)B_{\tau_1,\tau_{k-1}}|^q &\leq&  C_q
\bar{\sigma}^{2q} (\frac{1}{n})^{\frac{q}{2}}|t-s|^{\frac{q}{2}}\\
\E|B_{s,\tau_1}B_{\tau_{k-1},t}|^q &\leq& \bar{\sigma}^{2q}
(\frac{1}{n})^q, \label{rema2}
\end{eqnarray}
and similar results for $B_{\tau_1,\tau_{k-1}}(B_t-B_t^{(n)})$ and
$B_{s,\tau_1}^{(n)}B_{\tau_{k-1},t}^{(n)}.$ \\
By \eqref{diffchen}-\eqref{rema2}, one obtains
\be
\| \Delta \B_{s,t}^{(n)}\|_{\mathbb{L}^q} \leq  C_q
(\bar{\sigma}^2+\bar{\sigma} )(\frac{1}{n})^{\frac12}
|t-s|^{\frac12} \le C_q (\bar{\sigma}^2+\bar{\sigma} ) (\frac{1}{n})^{\theta}
|t-s|^{1-2\theta},
\ee
where we applied $|t-s|\ge \frac1n$ in the last inequality.
This completes the proof of inequalities \eqref{BB12} and then \eqref{BB1}. 
Considering \eqref{BB2}, it follows by a classical Borel-Cantelli argument in G-framework. Indeed, for any $\theta < \frac12- \alpha,$ one may
choose $q>2$ and $\theta < \theta' <\frac12 - \alpha,$ such that
$q(\theta'-\theta)>2.$ It is clear that
\begin{eqnarray*}
&&\hc(\bigcap_{M=1} \bigcap_{n=1} \bigcup_{m\geq
n}\{\varrho_\alpha(\BB^{strat}, \BB^{(m)} )>M\frac{1}{m^\theta} \})\\
&\leq& \hc(  \bigcap_{n=1} \bigcup_{m\geq
n}\{\varrho_\alpha(\BB^{strat}, \BB^{(m)} )> \frac{1}{m^\theta} \})=\hc(\limsup_{m}\{\varrho_\alpha(\BB^{strat}, \BB^{(m)} )>
\frac{1}{m^\theta}\}).
\end{eqnarray*}
Note that
$$
\hc(\varrho_\alpha(\BB^{strat}, \BB^{(m)} )> \frac{1}{m^\theta})\leq K^q
\frac{(\frac{1}{m^{\theta'}})^q}{(\frac{1}{m^{\theta}})^q}= K^q
\frac{1}{m^{q(\theta'-\theta)}}.
$$
According to Borel-Cantelli lemma in G-framework (see Lemma 5 in
\cite{DHP11}), one obtains
$$
\hc(\bigcap_{M=1} \bigcap_{n=1} \bigcup_{m\geq
n}\{\varrho_\alpha(\BB^{strat}, \BB^{(m)} )>M\frac{1}{m^\theta}
\})=0,
$$
which implies the desired result.

\end{proof}

%
%

Here is the main result of this section.

\begin{thm}
Suppose $f\in C_b^3(\R, L(\R^d)), \ g\in C_b^3(\R, L(\R^{d\times d})), \ h\in C_b^3(\R, \R) $, and $Y^{(n)}$ defined as in
\eqref{ode}. $X$ solves the following G-Stratonovich SDE
$$
X_t=x_0 + \int_0^t f(X_s) \circ dB_s + \int_0^t g(X_s)d\langle
B\rangle_s + \int_0^t h(X_s) ds,
$$
and $Y$ solves the following RDE driven by
G-Stratonovich rough paths,
\begin{equation}
dY_t= f(Y_t) d\BB^{strat} + g(Y_t) d\langle B \rangle_t + h(Y_t)dt,
\end{equation}
with initial condition $x_0.$ Then for any $\theta <
\frac12-\alpha,$ one has the following inequality,
$$
\|Y-Y^{(n)}\|_\alpha \leq M(\omega) \frac{1}{n^\theta}, \ \ \
\hc-q.s..
$$
In particular, $X=Y,$  $\hc-q.s.,$ and
$$
\|X-Y^{(n)}\|_\alpha  \leq M(\omega) \frac{1}{n^\theta}, \ \ \
\hc-q.s..
$$

\end{thm}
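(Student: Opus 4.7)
The plan is to regard $Y^{(n)}$ as the RDE solution driven by the canonical rough path lift $\BB^{(n)}$ of the piecewise linear $B^{(n)}$ (legitimate because for smooth drivers the RDE collapses to the ODE \eqref{ode}), and then invoke the Itô--Lyons continuity of Theorem \ref{RDEC} together with the quasi-sure rough-path convergence of Lemma \ref{B converge}. The extra drift terms $g(Y_s)\,d\langle B\rangle_s$ and $h(Y_s)\,ds$ are common to the RDE for $Y$ and the ODE for $Y^{(n)}$ and will only affect constants; the standard extension of Theorem \ref{RDEC} to mixed RDEs driven additionally by a fixed bounded-variation path (here, $\langle B\rangle$ and time, whose $1/2$-Hölder norms are controlled q.s.\ by $\bar\sigma$) is what is actually needed.

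First I would secure a q.s.\ uniform-in-$n$ bound on $\|\BB^{(n)}\|_{\FC^\alpha}$. Since $\BB^{strat}\in\FC^\alpha([0,1],\R)$ q.s.\ (which one reads off from Theorem \ref{Kol} applied to $B$ and $\B^{strat}$, as was carried out in \cite{PZ15}), and Lemma \ref{B converge} gives $\varrho_\alpha(\BB^{strat},\BB^{(n)})\le M(\omega)(1/n)^\theta$ q.s., the triangle inequality yields
$$
\sup_{n\ge 1}\|\BB^{(n)}\|_{\FC^\alpha}\ \le\ \|\BB^{strat}\|_{\FC^\alpha}+M(\omega)\ =:\ \widetilde M(\omega)\ <\ \infty
$$
on a set of full capacity, together with the analogous bound for $\BB^{strat}$ itself.

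Next I would apply the extended continuity estimate of Theorem \ref{RDEC} on the full interval $[0,1]$ with common upper bound $\widetilde M(\omega)$ and common starting point $x_0$, to obtain the pathwise local Lipschitz estimate
$$
\|Y-Y^{(n)}\|_\alpha\ \le\ C\!\left(\widetilde M(\omega),\alpha,f,g,h,\bar\sigma\right)\,\varrho_\alpha\!\left(\BB^{strat},\BB^{(n)}\right),
$$
and plug in the quasi-sure bound from Lemma \ref{B converge} to conclude $\|Y-Y^{(n)}\|_\alpha\le M(\omega)(1/n)^\theta$, $\hc$-q.s., for a new random constant. To identify $X=Y$ quasi-surely I would argue as follows: Theorem \ref{wzthm} gives $Y^{(n)}_t\to X_t$ in $L^2_G$ for each $t$, and by Proposition \ref{gconverge} a subsequence converges $\hc$-q.s.; the previous paragraph gives $Y^{(n)}_t\to Y_t$ in $\alpha$-Hölder norm, hence pointwise, $\hc$-q.s. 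Matching the two q.s.\ limits yields $X_t=Y_t$ $\hc$-q.s.\ for every $t\in[0,1]$, and the last inequality is then obtained by substitution.

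The main obstacle I anticipate is the clean invocation of Itô--Lyons continuity in the present mixed rough/bounded-variation setting: Theorem \ref{RDEC} is stated for the purely rough equation, so one must verify explicitly that adding the $g$- and $h$-driven bounded-variation terms — which enter identically on the $Y$ and $Y^{(n)}$ sides — preserves the local Lipschitz dependence on the rough driver. This follows from a standard Gronwall argument on the controlled-path fixed point, but it is the one place where the proof is more than an assembly of the preceding lemmas; once this is in hand, the quasi-sure rate $(1/n)^\theta$ is inherited directly from Lemma \ref{B converge}.
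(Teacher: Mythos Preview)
Your proposal is correct and follows essentially the same route as the paper: apply the local Lipschitz continuity of the It\^o--Lyons map (Theorem \ref{RDEC}) together with the quasi-sure rough-path rate of Lemma \ref{B converge} for the first inequality, and then combine Theorem \ref{wzthm} with the first part to identify $X=Y$. Your write-up is in fact more explicit than the paper's two-line proof---you spell out the uniform-in-$n$ bound on $\|\BB^{(n)}\|_{\FC^\alpha}$, the subsequence extraction via Proposition \ref{gconverge}, and the extension of Theorem \ref{RDEC} to the mixed rough/bounded-variation setting---whereas the paper only adds the remark that q.s.\ continuity of $X$ and $Y$ in $t$ is used to pass from pointwise identification to identification as paths.
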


\begin{proof}
According to Proposition \ref{B converge} and Theorem 7.5 in \cite{FH14}, one obtains
the first inequality. The particular case follows
from Theorem \ref{wzthm} and the fact that $X,Y$ are quasi-surely
continuous with respect to $t.$
\end{proof}

\begin{rem}
Similar as the classical case, in the above theorem, the best choice for this $\theta$ is taken when $\alpha=\frac13+, $ which means the convergence rate $\theta$ takes the value of any positive number smaller than $\frac16.$ However, the well-known best convergence rate $\frac12-$ can be achieved by applying rough paths with lower regularity, see \cite{FV10}.

\end{rem}

The following corollary implies the quasi-continuity of solutions of RDEs
driven by lifted $G$-Brownian motion with respect to uniform norm on
the canonical space.

\begin{coro}
$Y$ solves the RDE driven by
G-Stratonovich rough paths,
\begin{equation}
dY_t= f(Y_t) d\BB^{strat} + g(Y_t) d\langle B \rangle_t + h(Y_t)dt,
\end{equation}
with initial condition $x_0.$ Then for any
$t<T,$ $Y_t $ has a quasi-continuous version.
\end{coro}

\begin{proof}
Since $X_t= Y_t, \  \hc-q.s.$, and $X_t \in  L_G(\Omega_t),$
the result follows by the representation of
$L_G(\Omega_t),$ i.e. Theorem \ref{repG}.
\end{proof}


%



\renewcommand{\refname}{\large References}{\normalsize \ }


\begin{thebibliography}{99}


\bibitem{BFRS16} Bayer, C., Friz, P., Riedel, S. and Schoenmakers,
J.(2016) \emph{From rough path estimates to multilevel monte carlo.}
SIAM J. Numer. Anal., 54(3),1449-1483.




\bibitem{DHP11} Denis, L., Hu, M. and Peng, S.(2011) \emph{Function spaces
and capacity related to a sublinear expectation: application to
$G$-Brownian motion pathes.} Potential Anal., 34,139-161.


\bibitem{DNT12} Deya, A., Neuenkirch, A. and Tindel, S.(2012) \emph{A Milstein-type scheme without L$\acute{e}$vy area terms for SDEs driven
by fractional Brownian motion.} Ann. Inst. Henri Poincar$\acute{e}$
Probab. Stat. 48(2),518-550.

\bibitem{EJ13} Epstein, L. G. and Ji, S.(2013) \emph{Ambiguous Volatility and Asset Pricing in Continuous Time}. The Review of Financial Studies. 26(7), 1740-1786.

\bibitem{FH14} Friz, P. and Hairer, M.(2014) \emph{A course on rough paths.} E-print available at
http://www.hairer.org/notes/RoughPaths.pdf.

\bibitem{FV10} Friz, P. and Victoir, N.(2010) \emph{Multidimensional stochastic processes as rough
paths}. Vol.120 of Cambridge Studies in Advanced Mathematics.
Cambridge University Press, Cambridge.





\bibitem{HP15} Hairer, M. and Pardoux, \'{E}.(2015) \emph{A Wong-Zakai theorem for stochastic PDEs}. J. Math. Soc. Japan,
67(4), 1551-1604.

\bibitem{HJPS2} Hu, M., Ji, S., Peng, S., Song, Y.(2014): \emph{Comparison theorem, Feynman-Kac formula and Girsanov
transformation for BSDEs driven by $G$-Brownian motion}. Stoch. Process. Appl. 124(2), 1170–
1195.


\bibitem{G04} Gubinelli, M.(2004) \emph{Controlling rough paths.} Journal of Functional Analysis, 216, 86-140.

\bibitem{G10} Gubinelli, M.(2010) \emph{Ramification of rough paths.} J. Differential Equations 248(4), 693-721.

\bibitem{GQY14} Geng, X. Qian, Z and Yang, D.(2014) \emph{G-Brownian motion as rough paths and differential equations driven by G-Brownian motion.}
S\'{e}minaire de Probabilit\'{e}s XLVI, Lecture Notes in Mathematics
2123, DOI 10.1007/978-3-319-11970-0\_6. Springer, New York, London.





\bibitem{KM16} Kelly, D. and Melbourne I.(2016) \emph{Smooth approximation of stochastic differential equations}. Ann. Probab., 44(1),479-520.


\bibitem{L98} Lyons, T.(1998) \emph{Differential equations driven by rough signals.} Rev.Mat.Iberoamericana, 14(2),215-310.

\bibitem{LQ02} Lyons, T. and Qian, Z.(2002) \emph{System control and rough paths.} Oxford University Press, Oxford, et al.







\bibitem{P07a} Peng, S.(2007) \emph{$G$-expectation, $G$-Brownian motion and
related stochastic calculus of It\^o type}. Stochastic Analysis and
Applications, Abel Symp., Springer, Berlin, 2,541-567.

\bibitem{P08a} Peng, S.(2008) \emph{Multi-dimensional $G$-Brownian motion and related stochastic calculus under $G$-expectation}. Stochastic Processes and their Applications, 118(12),2223-2253.

\bibitem{P10} Peng, S.(2010) \emph{Nonlinear expectations and stochastic
calculus under uncertainty}. Available at arXiv:1002.4546v1
[math.PR].

\bibitem{JP16} Jin, H. and Peng, S. (2016) \emph{Optimal Unbiased Estimation for Maximal Distribution}. Available at arXiv: 1611.07994.

\bibitem{PZ20} Peng, S. and Zhou, Q. (2020) \emph{A hypothesis-testing perspective on the G-normal distribution theory}. Statistics \& Probability Letters, 156.

\bibitem{PZ16} Peng, S and Zhang, H.(2016) \emph{Stochastic calculus with respect to G-Brownian
motion viewed through rough paths}. Science China Mathematics,1-20.



\bibitem{SV72} Stroock, D. W. and Varadhan, S. R. S.(1972) \emph{On the support of diffusion processes with applications to the strong maximum principle}. In Proceedings of the Sixth Berkeley Symposium on Mathematical Statistics and Probability (Univ. California, Berkeley, Calif., 1970/1971), Vol. III: Probability theory, Univ. California Press, Berkeley, Calif.,333-359.


\bibitem{V14} Vorbrink, J.(2014) \emph{Financial Markets with Volatility Uncertainty.} J. of
Math. Econ., 53,64-78.


\bibitem{WZ65a} Wong, E and Zakai, M.(1965) \emph{On the relationship between ordinary and stochastic differential
equations}. Int. J. Eng. Sci., 3,213-29.

\bibitem{WZ65b} Wong, E and Zakai, M.(1965) \emph{On the convergence
of oridinary integrals to stochastic integrals}. Ann. Math. Stat.,
36,1560-1564.









\end{thebibliography}
\end{document}